\newtheorem{defn}{Definition}
\newtheorem{ex}{Example}
\newtheorem{rem}{Remark}
\newtheorem{prop}{Proposition}
\newtheorem{cor}{Corollary}
\begin{document}

\title{Free cyclic submodules in the context of the projective line}

\author{Edyta Bartnicka, Andrzej Matra{\'s}}

\institute{University of Warmia and Mazury in Olsztyn\\ Faculty of Mathematics and Computer Science\\ Poland\\{\ttfamily edytabartnicka@wp.pl, amatras@uwm.edu.pl}}


\maketitle
\setcounter{page}{1}
\begin{abstract}
We discuss the free cyclic submodules over an associative ring $R$ with unity. Special attention is paid to those, which are generated by outliers. This paper describes all orbits of such submodules in the ring of lower triangular $3$x$3$ matrices over a field $F$ under the action of the general linear group. Besides rings with outliers generating free cyclic submodules, there are also rings with outliers generating only torsion cyclic submodules and without any outliers. We give examples of all cases.
\end{abstract}

\begin{keywords} Admissible Pairs - Free Cyclic Submodules - Projective Line - Non-Unimodular Pairs - Outliers
\end{keywords}

\section{Introduction}
In \cite{benz} W. Benz describes classical geometries of M\"{o}bius, Laguerre and Minkowski, using the notion of the projective line over a ring. F.D. Veldkamp in \cite{veld} points out that the assumption of a ring to be of stable rank 2, allows to generalize many properties from classical projective geometry over a field. They both define the projective line by unimodular pairs. \newline
\noindent To admit a wider class of rings, A. Herzer \cite{her} defines a point of the projective line as a cyclic submodule generated by admissible pair. Hence points of $\mathbb{P}(R)$ are elements of the orbit under the action of the $GL_2(R)$. In the present paper we adopt this convention as well.
This approach without any assumptions leads to the existance of points of $\mathbb{P}(R)$ properly contained in another point. A. Blunck and H. Havlicek remark that avoiding this bizarre situation is equivalent to the assumption that a ring is Dedekind-finite, see \cite[Proposition 2.2]{hav1}. \newline
\noindent
H. Havlicek and M. Saniga \cite{hav.san} propose to consider another type of free cyclic submodules, i.e. represented by pairs not contained in any cyclic submodule generated by an unimodular pair (so-called outliers). In this note, we show that the class of non-unimodular free cyclic submodules can be wide. We find four orbits of such submodules in the ring $T_3$ of lower triangular $3$x$3$ matrices over a field $F$ under the action of $GL_2(T_3)$.  On the other hand there are classes of rings without outliers (e.g. semisimple rings, Proposition \ref{semisimple}.) and rings such that outliers generate only torsion submodules (e.g. finite commutative rings, Theorem \ref{finite.com}.). This answers the question posed in \cite{san} about outliers in finite rings. We remark also that there are infinite rings with non-unimodular free cyclic submodules properly contained in unimodular ones (Proposition \ref{PID}). Furthermore, we show that if $R$ is a finite ring and non-unimodular $R(a, b)\subset \ R^2$ is free, then $(a, b)$ is an outlier. In that case, there is no need to check the condition resulting from the definition.\newline\noindent
Using the classification of finite rings, we find all rings up to order $p^4, p-prime$, with outliers generating free cyclic submodules.\newline\noindent
The problem to completely characterize rings with outliers, especially generating free cyclic submodules, is still open.\newline

\section{Preliminaries}
Throughout this paper we shall only consider associative rings with 1 ($1\neq 0$). The group of invertible elements of the ring $R$ will be denoted by $R^*$.
If $R$ is a ring, the expression $R^2$ will mean a left free module over $R$.
If $(a, b)\in R^2$, then the set: $R(a, b)=\{(\alpha a, \alpha b): \alpha \in R\}$ is a {\sl left cyclic submodule of $R^2$}. It is called {\sl free} if the equation $(ra, rb)=(0, 0)$ implies that $r=0$. We assume that $R$ satisfies invariant basis property (IBP) \cite{ibp}. For such rings the basis of cyclic submodule $R(a, b)\subset R^2$ is always of cardinality $1$ and any invertible matrix is in the general linear group $GL_n(R)$ of invertible matrices with entries in $R$.  \\
\noindent
The general linear group $GL_2(R)$ acts in natural way (from the right) on the free left $R$-module $R^2$.
\begin{defn}\label{p.line}{\rm \cite{hav1}}
{\sl The projective line over $R$} is the orbit $$\mathbb{P}(R):=R(1, 0)^{GL_2(R)}$$ of the free cyclic submodule $R(1, 0)$ under the action of $GL_2(R)$.
\end{defn}
In other words, the points of $\mathbb{P}(R)$ are those free cyclic submodules of $R^2$  which possess a free cyclic complement. It provides to introduce admissibility.
\begin{defn}\label{adm}
A pair $(a, b) \in R^2$ is {\sl admissible}, if there exist elements $c, d \in R$ such that $$\left[\begin{array}{cclr}
a&b\\ c&d
\end{array}\right] \in GL_2(R),$$
i.e.  $R(a, b)$ is a free cyclic submodule which has a free cyclic complement.
If $R$ is commutative, then the condition mentioned above is equivalent to $$\det \left[\begin{array}{cclr}
a&b\\ c&d
\end{array}\right] \in R^*.$$
\end{defn}
\noindent
Therefore $\mathbb{P}(R)=\{R(a, b)\subset R^2$, $(a, b)$ admissible\}. As we mentioned before, in earlier definition of the projective line over commutative ring used by W. Benz \cite{benz}, the points of the projective line are cyclic submodules generated by unimodular pairs.
\begin{defn}\label{unim}
A pair $(a, b) \in R^2$ is {\sl right unimodular}, if there exist elements $x, y \in R$ such that $$ax+by=1.$$
\end{defn}
From now on, whenever we will write 'unimodularity', we always mean 'right unimodularity'.
\begin{rem}\label{un.fcs}
Obviously, the admissibility implies the unimodularity and if $(a, b)\in R^2$ is unimodular, then $R(a, b)$ is a free cyclic submodule of $R^2$.
\end{rem}
In contrast to the cyclic submodules generated by admissible pairs, others free cyclic submodules do not have a free cyclic complement in $R^2$.\newline
\\
\noindent The following simple remark describes unimodularity in terms of (right) ideals.
\begin{rem}\label{w.unim}
Let $R$ be a ring and  $a, b \in R$. The following statements are equivalent:
\begin{enumerate}
\item\label{w.unim.1} $aR+bR=R$.
\item\label{w.unim.2} There exist elements $x, y \in R$ such that $ax+by=1$.
\item\label{w.unim.3} There is no proper right ideal $I$ such that $a, b \in I$.
\end{enumerate}
\end{rem}

\begin{proof}
\ref{w.unim.1}.$\Leftrightarrow $\ref{w.unim.2}. See \cite{veld}.
\par\smallskip\noindent
\ref{w.unim.2}. $\Rightarrow$ \ref{w.unim.3}. \quad
Suppose that there exist $x, y \in R$ such that $ax+by=1$ and let $I$ be a right ideal such that $a, b \in I$. Of course, $ax \in I$ for all $x \in R$ and $by \in I$ for all $y \in R$. Consequently $(ax+by) \in I$ for all $x, y \in R$.  Thus $1 \in I$, and therefore $I=R$.
\par\smallskip\noindent
\ref{w.unim.3}. $\Rightarrow$ \ref{w.unim.2}. \quad
Assume that $ax+by\neq 1$ for all $x, y \in R$, then $\{ax+by; x, y \in R\}=aR+bR\neq R$.
$aR, bR$ are ideals, thus $aR+bR$ is an ideal too, and it contains $a,b$. So,  $aR+bR$ is a proper right ideal which contradicts \ref{w.unim.3}.\qed
\end{proof}
In general, cyclic submodules generated by unimodular (resp. admissible) pairs can be also generated by non-unimodular (resp. non-admissible) ones.
In special cases cyclic submodule generated by unimodular (resp. admissible) pair cannot have non-unimodular (resp. non-admissible) representation. \newline
\\
\noindent
We substitute 'admissible' by 'unimodular' in {\rm \cite[Proposition 2.1 (2)]{hav1}} and we get:
\begin{prop} \label{prop.hav} Let  $(x, y)\in R^2$ be unimodular and let $r\in R$. Put $(a, b):=r(x, y).$ Then
\begin{enumerate}
\item $r$ is left invertible if, and only if, $R(x, y)=R(a, b)$.
\item\label{r.right.inv} $r$ is right invertible if, and only if, $(a, b)$ is unimodular.
\end{enumerate}
\end{prop}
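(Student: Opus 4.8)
The plan is to derive both equivalences directly from a single unimodularity witness for $(x,y)$. Since $(x,y)$ is unimodular, I first fix $u,v\in R$ with $xu+yv=1$; this identity will be the engine for every direction. I also record the explicit descriptions $(a,b)=(rx,ry)$ and $R(a,b)=\{(\alpha rx,\alpha ry):\alpha\in R\}$, so that membership and inclusion questions reduce to solving equations in $R$. I would dispose of part (2) first, as it is the cleaner statement, and then reuse the same computation for part (1).

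For part (2), the backward direction is immediate: if $(a,b)=(rx,ry)$ is unimodular, then $rxs+ryt=1$ for some $s,t\in R$, and rewriting this as $r(xs+yt)=1$ exhibits $xs+yt$ as a right inverse of $r$. For the forward direction I take a right inverse $r'$ of $r$ (so $rr'=1$) and multiply the identity $xu+yv=1$ on the left by $r$ and on the right by $r'$; this yields $(rx)(ur')+(ry)(vr')=r(xu+yv)r'=rr'=1$, so $(a,b)$ is unimodular. The only point requiring care is that, $R$ being noncommutative, the inverse $r'$ must be inserted on the right so that it can cancel against $r$ on the outside.

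For part (1), the forward direction uses a left inverse $r''$ of $r$ (so $r''r=1$). The inclusion $R(a,b)\subseteq R(x,y)$ is automatic because $(a,b)=r(x,y)$, whence $\alpha(a,b)=\alpha r(x,y)\in R(x,y)$; the reverse inclusion follows from $(x,y)=r''(a,b)=(r''rx,r''ry)\in R(a,b)$. For the backward direction, the equality $R(x,y)=R(a,b)$ forces $(x,y)\in R(a,b)$, so $x=\alpha rx$ and $y=\alpha ry$ for some $\alpha\in R$; substituting into $xu+yv=1$ gives $1=(\alpha rx)u+(\alpha ry)v=\alpha r(xu+yv)=\alpha r$, which makes $\alpha$ a left inverse of $r$.

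I do not expect a serious obstacle here: the whole argument is essentially bookkeeping around a single equation. The one genuine subtlety is side-consistency, namely that left invertibility governs the generator equality (part 1) while right invertibility governs unimodularity (part 2), matching the fact that $R$ acts on $R^2$ from the left by scalars but unimodularity is a right condition. Keeping the inverses on the correct side throughout is therefore the main thing to verify.
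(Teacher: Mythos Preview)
Your argument is correct and, for part~(2), essentially identical to the paper's: both fix a unimodularity witness $xu+yv=1$ (the paper writes $xx'+yy'=1$), insert a right inverse of $r$ to get the forward implication, and factor $r$ out of $aa'+bb'=1$ for the converse. The paper actually proves only part~(2), referring to \cite[Proposition~2.1]{hav1} for part~(1); your self-contained treatment of part~(1) via $(x,y)=r''(a,b)$ and $1=\alpha r(xu+yv)$ is the natural argument and fills that gap.
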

\begin{proof}
{\it \ref{r.right.inv}.} Suppose that $(x, y)\in R^2$ is unimodular, then there exist $x', y'\in R$ with $xx'+yy'=1$. Let $r(x, y)=(a, b)$ for some $r\in R$.\newline
$"\Rightarrow"$ \newline
If $r$ is right invertible, then $rs=1$ for some $s\in R$. Hence $(a, b)$ is unimodular:
$$a(x's)+b(y's)=rx(x's)+ry(y's)=
r\Bigl(\bigl((xx')+(yy')\bigr)s\Bigr)=rs=1.$$
$"\Leftarrow"$ \newline
If $(a, b)$ is unimodular, then there exist $a', b' \in R$ with $aa'+bb'=1$, which implies that $r$ has a right inverse:
$$aa'+bb'=(rx)a'+(ry)b'=r(xa'+yb')=1.$$\qed
\end{proof}
\noindent
Rings with the property $ab=1\Rightarrow ba=1$
are called {\em Dedekind-finite}. On account of  the above proposition and of the Proposition 2.1 (2) \cite{hav1} we obtain:
\begin{cor}
If $R$ is Dedekind-finite, then the cyclic submodule $R(a, b)$ generated by an unimodular (resp. admissible) pair do not have non-unimodular (resp. non-admissible) representation.
\end{cor}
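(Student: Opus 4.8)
The plan is to reduce the statement to the two equivalences already available: Proposition \ref{prop.hav} handles unimodularity, and its admissible counterpart \cite[Proposition 2.1 (2)]{hav1} handles admissibility. The Dedekind-finite hypothesis will be used precisely to convert a one-sided inverse into a two-sided one, which is what links the two parts of those propositions. I would carry out the unimodular case in full; the admissible case is identical word for word after substituting \emph{admissible} for \emph{unimodular} and invoking \cite[Proposition 2.1 (2)]{hav1} in place of Proposition \ref{prop.hav}.

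First I fix an unimodular pair $(a, b)$ and assume that the cyclic submodule $R(a, b)$ admits another representation, say $R(a, b) = R(x, y)$; the aim is to prove that $(x, y)$ is itself unimodular. Because $(x, y) \in R(x, y) = R(a, b)$, there is some $r \in R$ with $(x, y) = r(a, b)$. Applying Proposition \ref{prop.hav} to the unimodular pair $(a, b)$, the scalar $r$, and the product $r(a, b) = (x, y)$, part (1) gives that the equality $R(a, b) = R(x, y)$ is equivalent to $r$ being left invertible; since the two submodules coincide by assumption, $r$ is left invertible.

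The decisive step is to upgrade this to right invertibility. If $sr = 1$, then $s$ has $r$ as a right inverse, so the defining implication $ab = 1 \Rightarrow ba = 1$ of a Dedekind-finite ring yields $rs = 1$; thus $r$ is right invertible as well. Part (2) of Proposition \ref{prop.hav} now states that $r$ is right invertible if, and only if, $(x, y) = r(a, b)$ is unimodular, and therefore $(x, y)$ is unimodular, as desired. I expect the only point requiring care to be the bookkeeping of which side each inverse sits on, so that the Dedekind-finite hypothesis is applied in the correct direction; apart from that, the argument is a direct chaining of the two equivalences through the bridge supplied by Dedekind-finiteness.
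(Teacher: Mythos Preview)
Your argument is correct and is exactly the approach the paper intends: the paper's proof is simply the sentence ``on account of the above proposition and of the Proposition 2.1 (2) \cite{hav1},'' and you have spelled out precisely how those two equivalences combine via the Dedekind-finite hypothesis. The only cosmetic point is that your $(a,b)$ and $(x,y)$ play the opposite roles from those in the statement of Proposition~\ref{prop.hav}, but you apply the proposition correctly with the roles swapped.
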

\medskip

\noindent
As we know, each admissible pair $(a, b)\in R^2$ is unimodular.
What about the converse implication?
There are examples of rings where unimodularity does not imply admissibility {\rm \cite[Remark 5.1]{hav2}}. However, it is also known that if $R$ is a ring of stable rank 2 (for example, local rings and matrix rings over fields), then admissibility and unimodularity are equivalent and $R$ is Dedekind-finite \cite[Remark 2.4]{hav1}. So finite or commutative rings satisfy this property as well. In case of such rings, the projective line can be described by using unimodularity or admissibility interchangeably. \newline
\\
\noindent
Let us introduce the following temporary notation:\newline
\noindent
$(F)$ \ Any element of the ring $R$ is either invertible or a zero divisor.\newline
\noindent
 It is known that any finite ring satisfies $(F)$. Additionally, if $R$ satisfies $(F)$, then the ring $M_n(R) \ (n\geqslant 1)$ fulfills this condition as well.
\begin{cor}\label{cor.z.hav}
Let $R$ satisfy $(F)$, $(x, y)\in R^2$ be unimodular and $(a, b)=r(x, y)$. Then the following are equivalent:
\begin{enumerate}
\item $r\in R$ is invertible.
\item $R(x, y)=R(a, b)$.
\item $(a, b)$ is unimodular.
\end{enumerate}
\end{cor}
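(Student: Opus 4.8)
\medskip
\noindent
\emph{Proof proposal.} The plan is to read all three conditions as statements about the single element $r$ and to route them through Proposition~\ref{prop.hav}. Since $(x,y)$ is unimodular and $(a,b)=r(x,y)$, that proposition already supplies two of the links at no cost: by its first part, condition (2), namely $R(x,y)=R(a,b)$, is equivalent to $r$ being left invertible, and by its second part, condition (3), namely unimodularity of $(a,b)$, is equivalent to $r$ being right invertible. Condition (1) is simply the assertion that $r$ is two-sidedly invertible. Thus the corollary collapses to one purely ring-theoretic fact: in a ring satisfying $(F)$, the three notions ``left invertible'', ``right invertible'' and ``invertible'' coincide for $r$. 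I would isolate this fact and prove it first, after which the corollary follows by pure bookkeeping.

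I would then establish the ring-theoretic fact. One direction is immediate: a two-sided inverse is in particular both a left and a right inverse, so (1) implies both one-sided conditions. For the converses, suppose first that $r$ is left invertible, say $sr=1$. Then $r$ has no nonzero right annihilator, because $rt=0$ yields $t=(sr)t=s(rt)=0$; dually, a right inverse $rs'=1$ kills every left annihilator, since $tr=0$ gives $t=t(rs')=(tr)s'=0$. Each one-sided inverse therefore rules out zero-divisor behaviour on one side. The remaining move is to invoke $(F)$: an element that is not a zero divisor is a unit, so once $r$ is seen to be regular it must be invertible, and the one-sided condition is upgraded to (1).

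The main obstacle is precisely this last upgrade, i.e.\ passing from ``not a zero divisor on one side'' to ``regular''. Left invertibility alone removes only a right annihilator and right invertibility only a left one, so neither by itself certifies regularity; this is exactly the step that can fail for general rings (a one-sided shift operator is one-sidedly invertible yet a zero divisor), and it is where the hypotheses must be used in full. The upgrade does go through whenever $R$ is Dedekind-finite, so that a one-sided inverse is automatically two-sided---in particular for the finite rings that motivate this corollary, whose Dedekind-finiteness was recorded above. Granting the equivalence of the three invertibility notions, the assembly is one line: (1) $\Leftrightarrow$ ``$r$ invertible'' $\Leftrightarrow$ ``$r$ left invertible'' $\Leftrightarrow$ (2) by the first part of Proposition~\ref{prop.hav}, and ``$r$ invertible'' $\Leftrightarrow$ ``$r$ right invertible'' $\Leftrightarrow$ (3) by its second part, whence (1), (2) and (3) are all equivalent.
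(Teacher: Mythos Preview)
Your reduction via Proposition~\ref{prop.hav} is exactly the route the paper intends: the corollary is stated there with no proof at all, immediately after Proposition~\ref{prop.hav}, so the implicit argument is precisely yours---read (2) as ``$r$ is left invertible'' via part~1, read (3) as ``$r$ is right invertible'' via part~2, and then identify all three with two-sided invertibility using $(F)$.

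You have in fact been more careful than the paper, and the obstacle you isolate is genuine. From $sr=1$ you only obtain that $r$ has trivial right annihilator, i.e.\ $r$ is not a \emph{left} zero divisor; nothing rules out a nonzero left annihilator, so $(F)$ as literally stated (``invertible or a zero divisor'') need not force $r$ to be a unit. Your shift example is exactly the standard witness: in $\mathrm{End}(V)$ for $V$ of infinite dimension, the left shift is right invertible, is a left zero divisor, and the ring does satisfy $(F)$. That ring is excluded by the paper's blanket IBP hypothesis, but the paper never argues that IBP together with $(F)$ yields Dedekind-finiteness, and it is not obvious that it does. So the corollary, read at face value for arbitrary $(F)$-rings, has the gap you describe; the paper simply does not address it. Your resolution---observe that in Dedekind-finite rings one-sided inverses are two-sided, and that all the rings actually used later (finite rings, matrix rings over fields) are Dedekind-finite---is the honest fix, and it suffices for every application the paper makes of this corollary.
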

\begin{rem}\label{prop:notfcs}
Let $R$ be a ring and let  $(a, b)\in R^2$. If there exists $(x, y)\in R^2$ and a left zero divisor $r\in R$ such that $(a, b)=r(x, y)$, then $R(a, b)$ is not a free cyclic submodule.
\end{rem}
\begin{proof}
Suppose that the above assumptions are satisfied. Hence there exists nonzero $\alpha \in R$ such that $\alpha r=0$, which yields:
$$\alpha (a, b)=\alpha (rx, ry)=\alpha r(x, y)=(0, 0).$$\qed
\end{proof}
Condition $(F)$ implies that the same free cyclic submodule can be represented by two pairs exactly if they are left-proportional by an invertible element of $R$.
\begin{prop}\label{F}
Let $R$ satisfy $(F)$.
A pair  $(a, b)\in R^2$ generating a free cyclic submodule is contained solely in free cyclic submodules.
\end{prop}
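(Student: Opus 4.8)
The plan is to show that the only cyclic submodule containing $R(a,b)$ is $R(a,b)$ itself, which is free by assumption. So I would start from an arbitrary cyclic submodule $R(x,y)$ with $R(a,b)\subseteq R(x,y)$. Since $(a,b)\in R(x,y)$, there is some $r\in R$ with $(a,b)=r(x,y)$. The whole statement then reduces to one claim: that this multiplier $r$ is invertible. Indeed, once $r\in R^*$ we get $(x,y)=r^{-1}(a,b)$, so $R(x,y)=R(a,b)$, and therefore every cyclic submodule containing $R(a,b)$ coincides with it and is free. Note that I cannot shortcut this through Proposition~\ref{prop.hav} or Corollary~\ref{cor.z.hav}, since those presuppose that the larger generator $(x,y)$ is unimodular, which is not available here --- the whole point being that $(a,b)$ may be a \emph{non}-unimodular generator of a free cyclic submodule.

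To prove $r\in R^*$ I would first exclude that $r$ is a left zero divisor, which is precisely the contrapositive of Remark~\ref{prop:notfcs}: if some nonzero $\alpha\in R$ satisfied $\alpha r=0$, then $\alpha(a,b)=\alpha r(x,y)=(0,0)$ with $\alpha\neq 0$, contradicting the freeness of $R(a,b)$. Hence no nonzero element annihilates $r$ on the left. Now I would invoke the hypothesis $(F)$: every element of $R$ is either invertible or a zero divisor. Since $r$ cannot be a left zero divisor, $(F)$ leaves only the possibility $r\in R^*$, and the reduction above closes the argument.

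The delicate point, and the one I would be most careful about, is exactly this application of $(F)$. The hypothesis only guarantees that a non-invertible element is a zero divisor of \emph{some} side, whereas Remark~\ref{prop:notfcs} rules out only the left zero divisors of $r$ (those with $\alpha r=0$). I therefore need the side used in $(F)$ to be compatible with the side appearing in Remark~\ref{prop:notfcs}, i.e. that a non-unit which is not a left zero divisor simply cannot occur. This is what holds in the rings relevant to this paper: in any finite ring, and in $T_3$, non-invertibility coincides with being a left zero divisor in the sense of Remark~\ref{prop:notfcs} (for a finite ring the maps $x\mapsto x r$ and $x\mapsto rx$ are injective iff surjective, so being a non-unit, a left zero divisor, and a right zero divisor all coincide). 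I would phrase the use of $(F)$ through this coincidence, after which the implication ``not a left zero divisor $\Rightarrow$ invertible'' is immediate, $r\in R^*$ follows, and the proposition is established.
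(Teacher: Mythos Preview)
Your argument is correct and follows essentially the same route as the paper: take any $R(x,y)\supseteq R(a,b)$, write $(a,b)=r(x,y)$, use Remark~\ref{prop:notfcs} to exclude that $r$ is a left zero divisor, and then invoke $(F)$ to conclude $r\in R^*$, whence $R(x,y)=R(a,b)$. The paper's proof is just the two-line version of this; your additional care about whether $(F)$ guarantees a \emph{left} zero divisor (as needed for Remark~\ref{prop:notfcs}) is a legitimate point that the paper simply elides.
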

\begin{proof}
Assume that there exist $(x, y)\in R^2$ and $r\in R$ such that $(a, b)=r(x, y)$. According to Remark \ref{prop:notfcs}. $r$ is an invertible element of $R$, hence $R(a, b)=R(x, y)$.\qed
\end{proof}
\noindent
The next class of cyclic submodules, which can be considered in the context of the projective line, is the one proposed by H. Havlicek and M. Saniga in \cite{hav.san}.
\begin{defn}\label{out}
{\rm \cite[Definition 9]{san}} A pair which is not contained in any cyclic submodule generated by an unimodular pair is called {\sl an outlier}.
\end{defn}
In the last section, will be needed one more concept. Recall that the monomorphism $f: {M'}\longrightarrow M$ 
 is called {\sl split} if there exists $g: M\longrightarrow {M'}$ such that $g\circ f=1_{M'}$. In other words, sequence of left modules $0\longrightarrow  {M'}\longrightarrow M$ is split.

\section{Free cyclic submodules generated by non-unimodular pairs}
For some rings there are free cyclic submodules which are generated only by non-unimodular pairs. We establish some connections between outliers and non-principal ideals, \cite{san}.

\begin{prop}\label{suma.g}
Let $(a, b)\in R^2$ be non-unimodular. If $aR+bR$ is a non-principal right ideal, then $(a, b)$ is an outlier.
\end{prop}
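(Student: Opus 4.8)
The plan is to argue by contraposition: I would assume that $(a,b)$ is \emph{not} an outlier and then show that $aR+bR$ must be principal, contradicting the hypothesis. By Definition \ref{out}, failing to be an outlier means that $(a,b)$ is contained in some cyclic submodule $R(x,y)$ generated by a unimodular pair $(x,y)\in R^2$. The membership $(a,b)\in R(x,y)=\{(\alpha x,\alpha y):\alpha\in R\}$ produces an element $r\in R$ with $(a,b)=r(x,y)$, that is $a=rx$ and $b=ry$.

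The key step is to relate the right ideal $aR+bR$ to the single generator $r$. Substituting $a=rx$ and $b=ry$ I would compute
$$aR+bR=(rx)R+(ry)R=rxR+ryR=r(xR+yR).$$
The last equality is direct rather than merely an inclusion: every element of $r(xR+yR)$ has the form $r(xs_1+ys_2)=rxs_1+rys_2\in rxR+ryR$, and conversely each $rxs_1+rys_2=r(xs_1+ys_2)$ lies in $r(xR+yR)$. Since $(x,y)$ is unimodular, Remark \ref{w.unim} gives $xR+yR=R$, so that $aR+bR=rR$, which is a \emph{principal} right ideal.

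This contradicts the assumption that $aR+bR$ is non-principal, so no unimodular $(x,y)$ with $(a,b)\in R(x,y)$ can exist, and therefore $(a,b)$ is an outlier. I expect the only points needing care to be the identity $rxR+ryR=r(xR+yR)$ and the correct passage from unimodularity of $(x,y)$ to $xR+yR=R$ via Remark \ref{w.unim}; neither is deep. I would also note that the stated hypothesis that $(a,b)$ is non-unimodular is in fact automatic here, since a non-principal $aR+bR$ can never equal $R=1\cdot R$, but I would retain it to match the statement and to emphasise the setting.
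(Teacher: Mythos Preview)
Your argument is correct and matches the paper's proof essentially line for line: the paper also assumes $(a,b)$ is not an outlier, writes $(a,b)=\alpha(x,y)$ for a unimodular $(x,y)$, and concludes $aR+bR=\alpha xR+\alpha yR=\alpha(xR+yR)=\alpha R$. Your added justifications for $rxR+ryR=r(xR+yR)$ and the remark that non-unimodularity is automatic are fine but not in the paper.
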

\begin{proof}
Assume that $(a, b)$ is not an outlier. By Definition \ref{out}.  there exist $\alpha \in R$ and an unimodular pair $(x, y)\in R^2$ such that $(a, b)=\alpha(x, y)$. Hence $aR+bR=\alpha xR+\alpha yR=\alpha (xR+yR)=\alpha R$, which completes the proof.  \qed
\end{proof}
\begin{cor}\label{out.g}{\rm \cite[Theorem 13.]{san}}
Let $(a, b)\in R^2$ be non-unimodular. Then $(a, b)$ is an outlier, if one of the following conditions is satisfied:
\begin{enumerate}
\item \label{nieg} Does not exist a principal proper right ideal $\alpha R$ such that $a, b\in \alpha R$.
\item \label{g} $aR+bR\varsubsetneq \alpha R$ for all principal proper right ideals $\alpha R$ such that $a, b\in \alpha R$.
\end{enumerate}
\end{cor}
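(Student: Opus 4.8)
The plan is to reduce both conditions to the hypothesis of Proposition \ref{suma.g}, which already guarantees that a non-unimodular pair generating a non-principal right ideal $aR+bR$ is an outlier. Thus it suffices to show that each of the two stated conditions forces $aR+bR$ to be non-principal; the conclusion then follows at once by applying that proposition.

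First I would record the single auxiliary fact needed. Since $(a,b)$ is assumed non-unimodular, Remark \ref{w.unim} (the equivalence of items \ref{w.unim.1} and \ref{w.unim.2}) tells us that $aR+bR \neq R$, so $aR+bR$ is a \emph{proper} right ideal. I would also note the trivial memberships $a = a\cdot 1 \in aR$ and $b = b\cdot 1 \in bR$, whence $a,b \in aR+bR$.

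Now suppose, toward a contradiction, that $aR+bR$ is principal, say $aR+bR = \gamma R$ for some $\gamma \in R$. By the previous remark $\gamma R = aR+bR$ is a proper right ideal containing both $a$ and $b$; in other words $\gamma R$ is a principal proper right ideal with $a,b \in \gamma R$. Under condition \ref{nieg} no such ideal is allowed to exist, which is an immediate contradiction. Under condition \ref{g} every principal proper right ideal $\alpha R$ containing $a,b$ must satisfy $aR+bR \subsetneq \alpha R$; applying this to $\alpha R = \gamma R$ would force $aR+bR \subsetneq \gamma R = aR+bR$, so the strict inclusion fails, again a contradiction. In either case $aR+bR$ cannot be principal, and Proposition \ref{suma.g} yields that $(a,b)$ is an outlier.

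I do not expect a genuine obstacle here: the entire content is the observation that the smallest principal right ideal one could hope to find containing $a$ and $b$ is $aR+bR$ itself, so any hypothesis that excludes $aR+bR$ as a principal proper ideal—either by forbidding such ideals outright (item \ref{nieg}) or by demanding strict containment (item \ref{g})—already rules out principality. The only step requiring a little care is the correct use of non-unimodularity to guarantee that $aR+bR$ is proper; without it the candidate $\gamma R$ might coincide with $R$, and the word ``proper'' in both conditions would become vacuous, breaking the contradiction.
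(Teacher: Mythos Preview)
Your argument is correct and is precisely the intended one: the paper states this result as an immediate corollary of Proposition~\ref{suma.g} without supplying a separate proof, and your write-up simply spells out why each of conditions~\ref{nieg} and~\ref{g} forces $aR+bR$ to be non-principal.
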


\begin{theorem}\label{id.g}
Let $R$ satisfy $(F)$.\begin{enumerate}
\item\label{g.niew} If there exists a principal proper right ideal $\alpha R$ containing $a$ and $b$, then $R(a, b)$ is a torsion cyclic submodule.
\item\label{out.sk}
If $R(a, b)\subset R^2$ is non-unimodular and free, then $(a, b)$ is an outlier.
\end{enumerate}
\end{theorem}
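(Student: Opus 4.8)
The plan is to prove part \ref{g.niew} first and then derive \ref{out.sk} from it. For \ref{g.niew} I start from the hypothesis that $a,b\in\alpha R$ for a proper principal right ideal $\alpha R$. Writing $a=\alpha x$ and $b=\alpha y$ immediately yields the factorization $(a,b)=\alpha(x,y)$, which makes Remark \ref{prop:notfcs} applicable \emph{provided} I can certify that $\alpha$ is a left zero divisor. This is exactly where condition $(F)$ enters: because $\alpha R\neq R$, the element $\alpha$ is not right invertible, hence not invertible, and $(F)$ then forces it to be a zero divisor — more precisely, to be annihilated on the left by a nonzero element, which is the reading of ``left zero divisor'' used in Remark \ref{prop:notfcs} and already exploited in the proof of Proposition \ref{F}. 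Once $\alpha$ is known to be a left zero divisor, Remark \ref{prop:notfcs} gives that $R(a,b)$ is not free, i.e. it is torsion.

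For \ref{out.sk} I argue by contradiction, assuming $R(a,b)$ is non-unimodular and free while $(a,b)$ is \emph{not} an outlier. By Definition \ref{out} this means $(a,b)\in R(x,y)$ for some unimodular pair $(x,y)$, so $(a,b)=\alpha(x,y)$ for a suitable $\alpha\in R$. The key computation is to identify the generated ideal: using $xR+yR=R$ (unimodularity of $(x,y)$) I obtain
$$aR+bR=\alpha xR+\alpha yR=\alpha(xR+yR)=\alpha R,$$
so $aR+bR$ is principal. Since $(a,b)$ is non-unimodular, Remark \ref{w.unim} gives $aR+bR\neq R$, hence $\alpha R$ is a \emph{proper} principal right ideal containing both $a$ and $b$. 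Now part \ref{g.niew} applies and forces $R(a,b)$ to be torsion, contradicting the assumption that it is free. Therefore $(a,b)$ must be an outlier.

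The step I expect to be the real obstacle is the one buried in \ref{g.niew}: passing from ``$\alpha$ is a non-unit'' to ``$\alpha$ is a left zero divisor in the precise sense required by Remark \ref{prop:notfcs}''. Condition $(F)$ literally guarantees only that a non-unit is \emph{some} kind of zero divisor, whereas Remark \ref{prop:notfcs} needs a nonzero \emph{left} annihilator of $\alpha$. In the finite setting this is automatic, since for a finite ring the map $r\mapsto r\alpha$ is injective iff surjective iff $\alpha$ is invertible, so every non-unit is left-annihilated; this is precisely the equivalence that underlies the use of $(F)$ in Proposition \ref{F}. I would make this implication explicit once and then reuse it, so that both parts rest on the single mechanism ``non-unit $\Rightarrow$ left zero divisor $\Rightarrow$ non-free'' supplied by $(F)$ together with Remark \ref{prop:notfcs}. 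A secondary point to handle with care in \ref{out.sk} is the correct direction of Remark \ref{w.unim}, ensuring that non-unimodularity of $(a,b)$ is faithfully translated into $aR+bR\neq R$ so that the ideal $\alpha R$ is genuinely proper and part \ref{g.niew} can be invoked.
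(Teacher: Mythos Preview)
Your argument for part \ref{g.niew} is exactly the paper's: write $a=\alpha c$, $b=\alpha d$, use $(F)$ to produce a nonzero $r$ with $r\alpha=0$, and conclude $r(a,b)=(0,0)$. Your explicit discussion of why $(F)$ yields a nonzero \emph{left} annihilator is in fact more careful than the paper, which simply asserts it.

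For part \ref{out.sk} your route differs from the paper's. You reduce to part \ref{g.niew} by computing $aR+bR=\alpha(xR+yR)=\alpha R$ (the same device as in the proof of Proposition \ref{suma.g}) and then invoking part \ref{g.niew} to contradict freeness. The paper does not go through part \ref{g.niew} at all: it observes directly that if $R(a,b)$ is free then, by Remark \ref{prop:notfcs} together with $(F)$, every $r$ in a factorization $(a,b)=r(x,y)$ must be invertible; applied to a unimodular $(x,y)$ this would force $(a,b)$ to be unimodular (via Proposition \ref{prop.hav} or Corollary \ref{cor.z.hav}), contradicting the hypothesis. Both arguments are correct. Yours has the virtue of making part \ref{out.sk} a corollary of part \ref{g.niew}; the paper's is a line shorter and avoids the ideal computation.
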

\begin{proof}
\ref{g.niew}. Suppose that the above assumptions are satisfied. Then there exist $c, d \in R$ such that $a=\alpha c, b=\alpha d$ and nonzero $r\in R$ with $r\alpha =0$. We thus get $r(a, b)=r(\alpha c, \alpha d)=(0, 0)$, which is our claim.\newline
\ref{out.sk}.
If $R(a, b)$ is a free cyclic submodule, then $r\in R$ is invertible for all $(x, y)\in R^2$ such that $(a, b)=r(x, y)$, which follows from Remark \ref{prop:notfcs}. Hence $(a, b)$ is an outlier.\qed
\end{proof}
\begin{ex} \label{gen.niegen}
Consider the ring $R=\{\left[\begin{array}{cclr}
a&0&0 \\ b&a&0 \\c&0&d
\end{array}\right]; a, b, c, d\in GF(2)\}$. Let $$\left[\begin{array}{cclr}
1&0&0 \\ 1&1&0 \\1&0&0
\end{array}\right]=A, \ \left[\begin{array}{cclr}
1&0&0 \\ 1&1&0 \\0&0&0
\end{array}\right]=B, \ \left[\begin{array}{cclr}
1&0&0 \\ 0&1&0 \\1&0&0
\end{array}\right]=C, \ \left[\begin{array}{cclr}
1&0&0 \\ 0&1&0 \\0&0&0
\end{array}\right]=D,$$$$
\left[\begin{array}{cclr}
0&0&0 \\ 1&0&0 \\1&0&0
\end{array}\right]=I, \ \left[\begin{array}{cclr}
0&0&0 \\1&0&0 \\0&0&0
\end{array}\right]=J, \ \left[\begin{array}{cclr}
0&0&0 \\ 0&0&0 \\1&0&0
\end{array}\right]=K, \ \left[\begin{array}{cclr}
0&0&0 \\ 0&0&0 \\0&0&0
\end{array}\right]=0.$$
\noindent
There are exactly two right ideals which are not principal: \ $I_1=\{0, I, J, K\}, \ I_2=\{0, A, B, C, D, I, J, K\}.$ By Proposition \ref{suma.g}. pairs of matrices which are generators of ideals $I_1, I_2$, are outliers. For example,  {\rm gen}$(I, A)=$ {\rm gen}$(K, A)=$ {\rm gen}$(A, D)=$ {\rm gen}$(A, B)=I_2$. An easy calculation shows that every of them generates a free cyclic submodule, for instance,
$$\left[\begin{array}{cclr}
a&0&0 \\ b&a&0 \\c&0&d
\end{array}\right]\Bigl(\left[\begin{array}{cclr}
0&0&0 \\ 1&0&0 \\1&0&0
\end{array}\right], \left[\begin{array}{cclr}
1&0&0 \\ 1&1&0 \\1&0&0
\end{array}\right]\Bigr)=\Bigl(\left[\begin{array}{cclr}
a&0&0 \\ b+a&a&0 \\c+d&0&0
\end{array}\right], \left[\begin{array}{cclr}
0&0&0 \\ a&0&0 \\d&0&0
\end{array}\right]\Bigr)=$$$$=\Bigl(\left[\begin{array}{cclr}
0&0&0 \\ 0&0&0 \\0&0&0
\end{array}\right], \left[\begin{array}{cclr}
0&0&0 \\ 0&0&0 \\0&0&0
\end{array}\right]\Bigr)\Leftrightarrow \left[\begin{array}{cclr}
a&0&0 \\ b&a&0 \\c&0&d
\end{array}\right]=\left[\begin{array}{cclr}
0&0&0 \\ 0&0&0 \\0&0&0
\end{array}\right].$$
There is one principal right ideal $I_3$ contains eight elements, including: $I, J, K$. Notice that the right ideal $IR+JR=\{0, I\}+\{0, J\}=\{0, I, J, K\}\varsubsetneq I_3$. Althought pairs $(I, J)$, $(J, I)$  are outliers, they do not generate free cyclic submodules by Theorem \ref{id.g}. \ref{g.niew}. By the same method follows that pairs $(I, K)$, $(K, I)$ $(K, J)$, $(J, K)$  are outliers too and do not generate free cyclic submodules. In consequence,  there are two different kinds of outliers. {\rm 24} of them generates {\rm 6} free cyclic submodules and {\rm 6} others do not.
\end{ex}
\begin{ex}\label{out.nieout}
Choose the pair $v=\Bigl(\left[\begin{array}{cclr}
0&0&0 \\ 1&0&0 \\1&0&0
\end{array}\right],\left[\begin{array}{cclr}
1&0&0 \\ 1&1&0 \\1&0&0
\end{array}\right]\Bigr)$ over the ring $R$ of matrices $\{\left[\begin{array}{cclr}
a&0&0 \\ b&a&0 \\c&0&d
\end{array}\right], a, b, c, d\in GF(p), p-prime\}$. $v$ is right and left non-unimodular over $R$. The cyclic submodule $Rv$ is free and $v$ is an outlier (from the left). But the cyclic submodule $vR$ is torsion and $v$ is not an outlier (from the right). So the definition of outlier is not symmetric.
\end{ex}
\begin{ex}\label{ternions}
{\rm \cite{havmat}} Consider the ring $T$ of ternions, which is isomorphic to the ring of upper triangular $2${\rm x}$2$ matrices with entries from an arbitrary commutative field $F$. The free cyclic submodules fall into two distinct orbits under the action of the $GL_2(T)$:
$$O_1=T\Bigl(\left[\begin{array}{cclr}
1&0 \\ 0&1
\end{array}\right], \left[\begin{array}{cclr}
0&0 \\ 0&0
\end{array}\right]\Bigr)^{GL_2(T)} \ \ \
O_2=T\Bigl(\left[\begin{array}{cclr}
0&0 \\ 0&1
\end{array}\right], \left[\begin{array}{cclr}
0&1 \\ 0&0
\end{array}\right]\Bigr)^{GL_2(T)}.$$
The first orbit makes up the projective line $\mathbb{P}(T)$, the second one is the orbit of free cyclic submodules generated by outliers.
\end{ex}
\noindent Let $R=T_3$ be the ring of lower triangular 3x3 matrices with entries from an arbitrary commutative field $F$.
\begin{theorem}\label{orb.tr}
Under the action of the general linear group $GL_2(T_3)$ the free cyclic submodules of $T_3$ fall into $5$ distinct orbits. Pairs generating free cyclic submodules of \ $T_3$ fall into $4+|F|$ distinct orbits with the following representatives and corresponding right ideals:
\begin{enumerate}
\item\label{o.unim}
$\Bigl[\left[\begin{array}{cclr}
1&0&0 \\ 0&1&0 \\0&0&1
\end{array}\right],\left[\begin{array}{cclr}
0&0&0 \\ 0&0&0 \\0&0&0
\end{array}\right] \Bigr],\qquad \qquad I_1=\Big\{\left[\begin{array}{cclr}
a&0&0 \\ b&c&0 \\d&e&f
\end{array}\right]: a, b, c, d, e, f\in F\Bigr\};$
\item\label{o.o.1}
$\Bigl[\left[\begin{array}{cclr}
1&0&0 \\ 0&1&0 \\0&0&0
\end{array}\right],\left[\begin{array}{cclr}
0&0&0 \\ 0&0&0 \\0&1&0
\end{array}\right] \Bigr],\qquad \qquad I_2=\Big\{\left[\begin{array}{cclr}
a&0&0 \\ b&c&0 \\d&e&0
\end{array}\right]: a, b, c, d, e\in F\Bigr\};$
\item\label{o.o.2}
$\{\Bigl[\left[\begin{array}{cclr}
1&0&0 \\ 0&1&0 \\0&e&0
\end{array}\right],\left[\begin{array}{cclr}
0&0&0 \\ 0&0&0 \\1&0&0
\end{array}\right] \Bigr], e\in F\},\qquad \qquad I_3=\Big\{\left[\begin{array}{cclr}
a&0&0 \\ b&c&0 \\d&ec&0
\end{array}\right]: a, b, c, d\in F\Bigr\};$
\item\label{o.o.3}
$\Bigl[\left[\begin{array}{cclr}
1&0&0 \\ 0&0&0 \\0&1&0
\end{array}\right],\left[\begin{array}{cclr}
0&0&0 \\ 1&0&0 \\0&0&0
\end{array}\right] \Bigr],\qquad \qquad I_4=\Big\{\left[\begin{array}{cclr}
a&0&0 \\ b&0&0 \\d&e&0
\end{array}\right]: a, b, d, e\in F\Bigr\};$
\item\label{o.o.4}
$\Bigl[\left[\begin{array}{cclr}
1&0&0 \\ 0&0&0 \\0&0&1
\end{array}\right],\left[\begin{array}{cclr}
0&0&0 \\ 1&0&0 \\0&0&0
\end{array}\right] \Bigr],\qquad \qquad I_5=\Big\{\left[\begin{array}{cclr}
a&0&0 \\ b&0&0 \\d&e&f
\end{array}\right]: a, b, d, e, f\in F\Bigr\}.$
\end{enumerate}
\end{theorem}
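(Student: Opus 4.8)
The plan is to use the right ideal $I(a,b):=aR+bR$ (with $R=T_3$) as the organizing invariant. First I would record that $I(a,b)$ is unchanged by the $GL_2(R)$-action: if $(a',b')=(a,b)M$ with $M\in GL_2(R)$, then $a',b'\in I(a,b)$, and applying $M^{-1}$ gives the reverse inclusion, so $I(a',b')=I(a,b)$. Next I would translate freeness into a property of $I(a,b)$ alone. Since $ra=rb=0$ is equivalent to $r\,I(a,b)=0$, the submodule $R(a,b)$ is free if and only if the left annihilator of the right ideal $I(a,b)$ vanishes. Writing each $r\in T_3$ by its rows and each element of $I$ by its columns, $r\cdot m=0$ means every row of $r$ is orthogonal to every column of $m$; since the first row of a lower triangular matrix lies in $F(1,0,0)$, the second in the span of the first two unit rows, and the third is unconstrained, one checks that $\mathrm{l.ann}(I)=0$ precisely when the columns of the elements of $I$ span all of $F^{3}$. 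Thus the free-generating pairs are exactly those whose right ideal has full column space.

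Having this criterion, I would combine it with Theorem \ref{id.g}: by part \ref{g.niew} a free $R(a,b)$ cannot have $a,b$ inside a proper principal right ideal, so $I(a,b)$ is either all of $T_3$ (the unimodular case, giving the projective line and orbit \ref{o.unim} by Remark \ref{un.fcs}) or a non-principal right ideal, in which case $(a,b)$ is an outlier by Proposition \ref{suma.g}. The combinatorial core is then to enumerate the right ideals $I\subseteq T_3$ that are generated by two elements, have full column space $F^{3}$, and are either $T_3$ or non-principal. I would carry this out by bookkeeping the admissible diagonal patterns (which of the entries $m_{22},m_{33}$ are forced to vanish) together with the extra linear relations these impose on the strictly lower entries, testing principality of each candidate by comparing its dimension with that of $mT_3\cong T_3/\mathrm{r.ann}(m)$. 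This yields precisely the five shapes $I_1,\dots,I_5$, with the one-parameter family $I_3(e)$ arising from the single admissible relation $m_{32}=e\,m_{22}$.

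For each such right ideal I would then reduce an arbitrary generating pair to the stated representative by explicit column operations from $GL_2(T_3)$ — swaps $(a,b)\mapsto(b,a)$, shears $(a,b)\mapsto(a+br,b)$ and $(a,b)\mapsto(a,b+ar)$, and right scalings by units — using the full-column-space hypothesis to clear entries and normalise the diagonal. Because the right ideal is a $GL_2$-invariant and the ideals $I_1,I_2,I_3(e),I_4,I_5$ are pairwise distinct (the value $e$ being recoverable from $I_3(e)$ through the relation $m_{32}=e\,m_{22}$), distinct representatives lie in distinct orbits; conversely the reduction shows every free-generating pair is carried onto one of them. This accounts for the $4+|F|$ orbits of pairs.

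Finally I would pass from pairs to submodules. By Proposition \ref{F}, two pairs generate the same free cyclic submodule exactly when they are left-proportional by a unit of $T_3$, so the orbits of submodules are the orbits of pairs modulo the extra left action $(a,b)\mapsto u(a,b)$, $u\in T_3^{*}$. Left multiplication by a unit sends $I$ to $uI$ and induces a right-module isomorphism $I\cong uI$, so the right-module isomorphism type of $I(a,b)$ is a genuine invariant of the submodule orbit; it separates $I_1,I_2,I_4,I_5$ (hence orbits \ref{o.unim}, \ref{o.o.1}, \ref{o.o.3}, \ref{o.o.4} remain distinct). For the family $I_3(e)$ I would instead exhibit units $u$ whose left action, followed by a column reduction, turns the parameter $e$ into any prescribed value, so the whole family $I_3(e)$ collapses to a single submodule orbit, yielding $4+1=5$ orbits in all. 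The main obstacle I expect lies in the second and third steps: proving that the enumeration of full-column-space, non-principal right ideals is exhaustive, so that no orbit is overlooked, and pushing the column reductions far enough to confirm that each right ideal supports exactly one pair-orbit. The collapse of the $I_3(e)$ family under left multiplication is the subtle point that separates the count of pairs from the count of submodules.
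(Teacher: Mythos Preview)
Your route is genuinely different from the paper's. The paper proceeds by a direct Gaussian-elimination reduction: starting from an arbitrary non-unimodular free pair it multiplies by explicit elements of $GL_2(T_3)$ through a case tree until one of the listed representatives appears, and then checks distinctness of the representatives by hand. Only afterwards, in Corollary~\ref{pair.orbits}, does it observe that the right ideal $aR+bR$ classifies the pair-orbits. You invert this logic: the right ideal is your primary invariant, freeness is recast as ``full column space'', and Theorem~\ref{id.g} is invoked to restrict to non-principal ideals before any reduction is attempted. This is conceptually cleaner and makes the count $4+|F|$ essentially a bookkeeping exercise once the enumeration of two-generated right ideals with trivial left annihilator is complete. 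The price is that step~5 (transitivity within a fixed right ideal) still requires the same kind of explicit column operations the paper performs, so the computational core is not avoided, only reorganised.

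There is, however, a genuine gap in your final step. You assert that the right-module isomorphism type of $I(a,b)$ separates the submodule orbits $I_1,I_2,I_4,I_5$ and then collapse the $I_3(e)$ by exhibiting units. But the isomorphism type does \emph{not} separate orbit~\ref{o.o.2} from orbit~\ref{o.o.3}: decomposing each right ideal into the indecomposable projectives $P_i=E_{ii}T_3$ one finds
\[
I_3(e)\;\cong\;P_1\oplus P_1\oplus P_2\;\cong\;I_4\qquad\text{for every }e\in F,
\]
so your invariant yields only four isomorphism classes, and nothing in your outline prevents the collapsed $I_3$-orbit from coinciding with orbit~\ref{o.o.3}. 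The fix is to use the finer invariant you actually have available: not the isomorphism type of $I$, but its orbit under left multiplication by $T_3^{*}$. A short computation gives $u\,I_3(e)=I_3\bigl((u_{32}+u_{33}e)/u_{22}\bigr)$ while $u\,I_k=I_k$ for $k\in\{1,2,4,5\}$; since no $I_3(e')$ equals $I_4$ (the former allows $m_{22}$ free, the latter forces $m_{22}=0$), this simultaneously collapses the $I_3(e)$ to one submodule orbit and keeps it distinct from the other four. With this correction your plan goes through; the paper sidesteps the whole issue by verifying distinctness of the five representatives directly.
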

\begin{proof}
Clearly, all unimodular pairs are in the orbit
 $\Bigl[\left[\begin{array}{cclr}
1&0&0 \\ 0&1&0 \\0&0&1
\end{array}\right],\left[\begin{array}{cclr}
0&0&0 \\ 0&0&0 \\0&0&0
\end{array}\right] \Bigr]^{GL_2(T_3)}.$
Let now $\Bigl[\left[\begin{array}{cclr}
a&0&0 \\ b&c&0 \\d&e&f
\end{array}\right],\left[\begin{array}{cclr}
a'&0&0 \\ b'&c'&0 \\d'&e'&f'
\end{array}\right] \Bigr], a, b, c, d, e, f, a', b', c', d', e', f' \in F$:
\begin{enumerate}
\item \label{non-uni} be non-unimodular;
\item \label{free} generate free cyclic submodule.
\end{enumerate}
We obtain from \ref{free}. that $a\neq 0$ or $a'\neq 0$, and then we assume $a\neq 0$. Multiplying by the invertible matrix $$\left[\begin{array}{cclr}
\left[\begin{array}{cclr}
a^{-1}&0&0 \\
0&1&0 \\
0&0&1 \end{array}\right]
&\left[\begin{array}{cclr}
a^{-1}a'&0&0 \\
0&0&0 \\
0&0&0 \end{array}\right] \\ 0&I
\end{array}\right]$$
gives a pair $\Bigl[\left[\begin{array}{cclr}
1&0&0 \\ b_1&c&0 \\d_1&e&f
\end{array}\right],\left[\begin{array}{cclr}
0&0&0 \\ b_1'&c'&0 \\d_1'&e'&f'
\end{array}\right] \Bigr]$.
We consider now all possibilities:\newline \\
Case 1. $c\neq 0$. The result of multiplication $$\Bigl[\left[\begin{array}{cclr}
1&0&0 \\ b_1&c&0 \\d_1&e&f
\end{array}\right],\left[\begin{array}{cclr}
0&0&0 \\ b_1'&c'&0 \\d_1'&e'&f'
\end{array}\right] \Bigr] \left[\begin{array}{cclr}
\left[\begin{array}{cclr}
1&0&0 \\
0&c^{-1}&0 \\
0&0&1 \end{array}\right]
&\left[\begin{array}{cclr}
0&0&0 \\
0&-c^{-1}c'&0 \\
0&0&0 \end{array}\right] \\ 0&I
\end{array}\right]$$
is a pair $\Bigl[\left[\begin{array}{cclr}
1&0&0 \\ b_1&1&0 \\d_1&e_1&f
\end{array}\right],\left[\begin{array}{cclr}
0&0&0 \\ b_1'&0&0 \\d_1'&e_1'&f'
\end{array}\right] \Bigr]$. From \ref{non-uni}. we get $f=f'=0$. \newline
Case 1.1. $b_1'\neq 0$. We multiply $\Bigl[\left[\begin{array}{cclr}
1&0&0 \\ b_1&1&0 \\d_1&e_1&0
\end{array}\right],\left[\begin{array}{cclr}
0&0&0 \\ b_1'&0&0 \\d_1'&e_1'&0
\end{array}\right] \Bigr]$ by the invertible matrix
$$\left[\begin{array}{cclr}
I&0 \\
\left[\begin{array}{cclr}
-{b'_1}^{-1}b_1&0&0 \\
0&0&0 \\
0&0&0 \end{array}\right]
&\left[\begin{array}{cclr}
{b'_1}^{-1}&0&0 \\
0&1&0 \\
0&0&1 \end{array}\right]
\end{array}\right],$$
which gives a pair $\Bigl[\left[\begin{array}{cclr}
1&0&0 \\ 0&1&0 \\d_2&e_1&0
\end{array}\right],\left[\begin{array}{cclr}
0&0&0 \\ 1&0&0 \\d_2'&e_1'&0
\end{array}\right] \Bigr]$. We know from \ref{free}. that $e_1\neq d_2'$.\newline \\
Case 1.1.1. $e_1'\neq 0$. We multiply again last pair by the invertible matrix $$\left[\begin{array}{cclr}
I&\left[\begin{array}{cclr}
0&0&0 \\
-1&0&0 \\
0&0&0 \end{array}\right] \\
\left[\begin{array}{cclr}
0&0&0 \\
-{e_1'}^{-1}d_2&-{e_1'}^{-1}e_1&0 \\
0&0&0 \end{array}\right]
&\left[\begin{array}{cclr}
1&0&0 \\
{e_1'}^{-1}(e_1-d_2')&{e_1'}^{-1}&0 \\
0&0&1 \end{array}\right]
\end{array}\right]$$
and finally we obtain representative of the orbit: $\Bigl[\left[\begin{array}{cclr}
1&0&0 \\ 0&1&0 \\0&0&0
\end{array}\right],\left[\begin{array}{cclr}
0&0&0 \\ 0&0&0 \\0&1&0
\end{array}\right] \Bigr]$.\newline
In the same manner (considering other cases and multiplying by invertible matrices) we get all orbits of pairs generating free cyclic submodules of $T_3$.\newline
\noindent
It is easy to check that they are distinct, i.e. there is no any invertible matrix that converts one orbit to another. Multiplication (from the left) representatives of orbits by the invertible elements of $T_3$, follows immediately that free cyclic submodules generated by pairs from point 3. are in the same orbit:
 $$\left[\begin{array}{cclr}
1&0&0 \\ 0&1&0 \\0&-e&1
\end{array}\right]\Bigl[\left[\begin{array}{cclr}
1&0&0 \\ 0&1&0 \\0&e&0
\end{array}\right],\left[\begin{array}{cclr}
0&0&0 \\ 0&0&0 \\1&0&0
\end{array}\right] \Bigr]=\Bigl[\left[\begin{array}{cclr}
1&0&0 \\ 0&1&0 \\0&0&0
\end{array}\right],\left[\begin{array}{cclr}
0&0&0 \\ 0&0&0 \\1&0&0
\end{array}\right] \Bigr].$$\qed
\end{proof}
\begin{cor}\label{pair.orbits}
Two pairs $(x, y), (w, z) \in {T_3}^2$ generating free cyclic submodules are in the same $GL_2(T_3)$-orbit if, and only if, the right ideals generated by $x, y \in T_3$ and by $w, z \in T_3$ coincide.
\end{cor}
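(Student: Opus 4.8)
The plan is to read the corollary off Theorem~\ref{orb.tr}, the only additional ingredient being that the right ideal generated by a pair is an invariant of the right $GL_2(T_3)$-action. Accordingly I would split the equivalence into its two implications and treat the forward one by a direct computation and the backward one by appealing to the classification.

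First I would settle the ``only if'' direction, which needs no classification. Suppose $(w,z)$ and $(x,y)$ lie in the same orbit, so $(w,z)=(x,y)M$ for some $M=\left[\begin{smallmatrix}p&q\\r&s\end{smallmatrix}\right]\in GL_2(T_3)$ with $p,q,r,s\in T_3$. Then $w=xp+yr$ and $z=xq+ys$, whence $wT_3+zT_3\subseteq xT_3+yT_3$, since $xpT_3\subseteq xT_3$ and $yrT_3\subseteq yT_3$ (and likewise for $z$). Applying the same reasoning to $(x,y)=(w,z)M^{-1}$ yields the reverse inclusion, so $xT_3+yT_3=wT_3+zT_3$. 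Thus the right ideal attached to a pair depends only on its orbit.

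For the ``if'' direction I would invoke Theorem~\ref{orb.tr}, which furnishes $4+|F|$ pairwise distinct orbits of pairs whose representatives generate, respectively, the right ideals $I_1$, $I_2$, the $|F|$ ideals $I_3$ indexed by $e\in F$, $I_4$ and $I_5$. By the implication just proved, every pair in a fixed orbit generates the same right ideal as the chosen representative, so there is a well-defined map from orbits to right ideals. It therefore suffices to check that these $4+|F|$ ideals are pairwise distinct: once this holds, the map is injective, and two pairs generating the same right ideal must lie in orbits carrying the same ideal, i.e.\ in one and the same orbit.

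The only genuine work is thus the distinctness of the listed ideals, which I would verify by inspecting a few matrix entries. As $F$-subspaces of $T_3$ the ideals have dimensions $6,5,4,4,5$; in particular $I_1$ is all of $T_3$ and is the unique one of full dimension. Among the two five-dimensional ideals, $I_2$ permits a nonzero $(2,2)$-entry but always has $(3,3)=0$, whereas $I_5$ always has $(2,2)=0$, so $I_2\neq I_5$. For the four-dimensional ideals, $I_4$ forces the $(2,2)$-entry to vanish, while $I_3(e)$ admits a nonzero $(2,2)$-entry $c$ and ties the $(3,2)$-entry to $ec$; taking $c=1$ (and the other free parameters zero) exhibits a matrix of $I_3(e)$ whose $(2,2)$-entry is $1$ and whose $(3,2)$-entry is $e$, which is therefore neither in $I_4$ (wrong $(2,2)$-entry) nor in $I_3(e')$ for $e'\neq e$ (wrong $(3,2)$-entry). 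Hence all $4+|F|$ ideals are distinct. I expect this bookkeeping over the one-parameter family $I_3(e)$ to be the main, though entirely elementary, obstacle, as it is precisely where the count $4+|F|$ for pairs — as opposed to the count $5$ for submodules — is reflected.
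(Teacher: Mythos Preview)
Your proposal is correct and follows exactly the same approach as the paper: the forward direction is the same invertibility argument ($(w,z)=(x,y)A$ gives one inclusion, $A^{-1}$ the other), and the backward direction is read off the classification in Theorem~\ref{orb.tr}. The only difference is that you spell out the distinctness of the $4+|F|$ right ideals explicitly, whereas the paper simply says this implication ``is straightforward from Theorem~\ref{orb.tr}.''
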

\begin{proof}
Let $I_{(x, y)}$ denote the right ideal of $T_3$ which is generated by $x$ and $y$. \newline
$"\Rightarrow "$\newline
If pairs $(x, y), (w, z) \in {T_3}^2$ are in the same $GL_2(T_3)$-orbit, then there exists a matrix $A\in GL_2(T_3)$ such that $(x, y)A=(w, z)$. This gives $I_{(w, z)}\subseteq I_{(x, y)}$. \newline
Next we multiply last equation by $A^{-1}$, which yields $(x, y)=(w, z)A^{-1}$, and, in consequence $I_{(x, y)}\subseteq I_{(w, z)}$. The result is $I_{(x, y)}= I_{(w, z)}$.\newline
$"\Leftarrow"$\newline
This is straightforward from Theorem \ref{orb.tr}. \qed
\end{proof}
\noindent
In case of  rings without $(F)$ there are also non-unimodular free cyclic submodules, that are not generated by outliers.
\begin{prop}\label{PID}
If $R$ is a (commutative) PID, then non-unimodular free cyclic submodules are generated by non-outliers.
\end{prop}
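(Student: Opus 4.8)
The plan is to exploit the B\'ezout/gcd structure of a PID to exhibit every non-unimodular pair as a scalar multiple of a unimodular one; by Definition \ref{out} this immediately shows that the pair is not an outlier, so that the submodule it generates is generated by non-outliers.

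First I would record the two features of a PID that drive the argument: $R$ is an integral domain, and every ideal of $R$ is principal. The domain property already pins down freeness, since a cyclic submodule $R(a,b)$ is free exactly when $(a,b)\neq(0,0)$: if, say, $a\neq 0$, then $ra=0$ forces $r=0$. Hence for a non-unimodular \emph{free} cyclic submodule $R(a,b)$ I may assume $(a,b)\neq(0,0)$ from the outset. I would also note that any two generating pairs of a free cyclic submodule over a domain differ by a unit (if $(a,b)=\beta\alpha(a,b)$ then $(\beta\alpha-1)(a,b)=0$ forces $\beta\alpha=1$), so the outlier status of the submodule does not depend on the chosen generator and it suffices to treat one representative $(a,b)$.

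Next I would bring in principality. Since $R$ is a PID, $aR+bR=dR$ for some $d\in R$, and $d\neq 0$ because $(a,b)\neq(0,0)$. Writing $a=dx$ and $b=dy$, I claim $(x,y)$ is unimodular. Indeed $dR=aR+bR=d(xR+yR)$, so for every $r\in R$ there is $s\in xR+yR$ with $dr=ds$; then $d(r-s)=0$, and cancelling the nonzero $d$ in the domain $R$ gives $r=s\in xR+yR$. Thus $xR+yR=R$, which by Remark \ref{w.unim} means $(x,y)$ is unimodular. The step I expect to require the most care is precisely this cancellation in the domain, together with the degenerate cases $a=0$ or $b=0$, where $(x,y)$ collapses to $(0,1)$ or $(1,0)$ and one checks unimodularity directly.

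Finally, $(a,b)=d(x,y)\in R(x,y)$, so $(a,b)$ is contained in the cyclic submodule generated by the unimodular pair $(x,y)$; by Definition \ref{out} it is therefore not an outlier. Here $d$ is necessarily a non-unit, since $(a,b)$ is non-unimodular, which is consistent with the proper containment $R(a,b)\varsubsetneq R(x,y)$ announced in the introduction. This is the exact opposite of the situation in Proposition \ref{suma.g}: over a PID the ideal $aR+bR$ is always principal, so that outlier criterion can never be triggered, and in fact no non-unimodular free cyclic submodule is ever generated by an outlier.
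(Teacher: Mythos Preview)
Your argument is correct and follows essentially the same route as the paper: both factor the non-unimodular pair through a gcd, writing $(a,b)=d(x,y)$ with $(x,y)$ unimodular, and then invoke Definition~\ref{out}. Your version is simply more explicit---you justify freeness via the domain property, verify that the outlier status is generator-independent, and spell out the cancellation step showing $xR+yR=R$---whereas the paper compresses all of this into the single line ``$\gcd(a,b)=d$ gives $a=dr_1$, $b=dr_2$ with $\gcd(r_1,r_2)=1$''.
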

\begin{proof}
We use the following equivalent characterization of a proper (commutative) PID:
\begin{itemize}
\item prime ideals are maximal if they are nonzero;
\item prime ideals are principal;
\item $\gcd(a, b)=1\Rightarrow$ gen$(a, b)=1$ for any $a, b\in R$;
\item $R$ is Bezout.
\end{itemize}
Suppopse that $(a, b)\in {R^2}$ is non-unimodular. If $\gcd(a, b)=d$, then $a=dr_1, b=dr_2$ with $\gcd(r_1, r_2)=1$. Hence $(r_1, r_2)$ is unimodular and $(a, b)\in R(r_1, r_2)$, so $(a, b)$ is non-outlier. \qed
\end{proof}

\section{Rings without non-unimodular free cyclic submodules}
\noindent
There are some rings $R$ such that free cyclic submodules $R(a, b)$ are generated only by admissible pairs  $(a, b)\in R^2$. They all makes up the projective line $\mathbb{P}(R)$, for instance, fields or finite local rings \cite[Theorem 20. 1]{san}. In case of these rings all free cyclic submodules can be written as a projective line: $\mathbb{P}(R)=\{R(1, x), x\in R\}\cup \{R(d, 1), d\in I\} I$ - the maximal ideal of $R$.
\begin{prop}\label{semisimple}
Let $R$ be a semisimple ring. A cyclic submodule $R(a, b)$ is free if, and only if $(a, b)\in \ R^2$ is admissible.
\end{prop}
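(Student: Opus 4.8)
The plan is to prove the two implications separately. The implication $(a,b)\ \text{admissible}\Rightarrow R(a,b)\ \text{free}$ needs no hypothesis on $R$ and is immediate: by Remark \ref{un.fcs} admissibility implies unimodularity, and unimodularity implies that $R(a,b)$ is a free cyclic submodule; indeed, by Definition \ref{adm} an admissible pair already generates, by definition, a free cyclic submodule (with a free cyclic complement). So all the content lies in the converse, $R(a,b)\ \text{free}\Rightarrow (a,b)\ \text{admissible}$, and I would spend the whole proof there.

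The central idea for the converse is to exploit two features of a semisimple ring $R$: every submodule of a module is a direct summand, and the isomorphism type of a finite-length semisimple module is determined by the multiplicities of its simple composition factors. First I would note that $R$, being semisimple, has finite length as a left module over itself, so $R^2$ is a semisimple module of finite length; hence the submodule $R(a,b)$ is a direct summand, say $R^2=R(a,b)\oplus N$ for some submodule $N$. Since $R(a,b)$ is \emph{free} cyclic, the map $R\to R(a,b)$, $r\mapsto r(a,b)$, is an isomorphism, so $R(a,b)\cong R$. Comparing simple composition factors in $R\oplus R\cong R^2=R(a,b)\oplus N\cong R\oplus N$ and invoking Jordan--H\"older, I would deduce that $N$ has exactly the same composition multiplicities as $R$; as semisimple modules with equal multiplicities are isomorphic, $N\cong R$. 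Fixing an isomorphism $\theta\colon R\to N$ and putting $(c,d):=\theta(1)$, it follows that $N=R(c,d)$ is a \emph{free cyclic} complement of $R(a,b)$.

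Finally, I would set $M:=\left[\begin{array}{cc} a & b \\ c & d\end{array}\right]$ and consider the endomorphism of $R^2$ given by right multiplication by $M$; since right multiplication commutes with left scalars, this is left-$R$-linear. As it sends $(1,0),(0,1)$ to the generators of $R(a,b),R(c,d)$, its image is $R(a,b)+R(c,d)=R^2$, so it is surjective. It is also injective: if $(\alpha,\beta)M=(0,0)$ then $\alpha(a,b)=-\beta(c,d)\in R(a,b)\cap R(c,d)=\{0\}$, whence $\alpha(a,b)=\beta(c,d)=(0,0)$, and freeness of both summands gives $\alpha=\beta=0$. A bijective left-$R$-linear endomorphism of the free module $R^2$ is represented by a matrix that is invertible in $M_2(R)$ — its inverse is again right multiplication by a matrix — so $M\in GL_2(R)$, which is precisely admissibility of $(a,b)$.

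I expect the main obstacle to be the middle step: upgrading the \emph{mere existence} of a complement $N$, which semisimplicity hands us for free, to the assertion that $N$ may be taken to be a \emph{free cyclic} submodule. This is exactly where the finer semisimple structure enters, via the cancellation $R\oplus R\cong R\oplus N\Rightarrow N\cong R$; one must also keep straight the bookkeeping between left modules and the right action of matrices when converting the direct-sum decomposition into invertibility of $M$.
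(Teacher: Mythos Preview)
Your argument is correct and rests on the same idea as the paper's one-line proof: semisimplicity makes the inclusion $R(a,b)\hookrightarrow R^2$ split, so $R(a,b)$ is a direct summand. Where the paper simply cites \cite[Corollary 13.10]{and.ful} and stops, you spell out the additional point that the complement $N$ must itself be free cyclic, via the cancellation $R\oplus R\cong R\oplus N\Rightarrow N\cong R$ afforded by Jordan--H\"older in finite length; this is a step the paper leaves implicit but which is indeed needed to reach admissibility (a free cyclic \emph{complement}, not merely a complement), so your version is the more complete of the two.
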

\begin{proof} It follows from the fact that any monomorphism $0\longrightarrow  R(a, b)\longrightarrow R^2$ is split. \cite[Corollary 13.10.]{and.ful}.\qed
\end{proof}
\noindent
Another class of rings without non-unimodular free cyclic submodules are finite principal ideal rings. \cite[Theorem 23]{san}.
\begin{theorem}{\rm\cite[4, VI.2]{mcd}}\label{product.local}
Let $R$ be a commutative finite ring. There exist local rings $R_1, R_2, ... , R_n$ such that $$R=R_1\times R_2\times ... \times R_n.$$
\end{theorem}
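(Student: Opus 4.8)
The plan is to exploit the fact that a finite commutative ring is Artinian, so that the classical structure theory of commutative Artinian rings applies, with the actual decomposition produced by the Chinese Remainder Theorem applied to the finitely many maximal ideals. First I would observe that, being finite, $R$ satisfies the descending chain condition on ideals and is therefore Artinian; in particular it has only finitely many maximal ideals $\mathfrak{m}_1,\dots,\mathfrak{m}_n$. Since distinct maximal ideals of a commutative ring are always comaximal (their sum strictly contains a maximal ideal, hence equals $R$), we have $\mathfrak{m}_i+\mathfrak{m}_j=R$ for all $i\neq j$.

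Next I would establish that the Jacobson radical $J=\mathfrak{m}_1\cap\cdots\cap\mathfrak{m}_n$ is nilpotent. The chain $J\supseteq J^2\supseteq\cdots$ stabilises by finiteness, say $J^k=J^{k+1}$; as every ideal of the finite ring $R$ is finitely generated, Nakayama's lemma applied to $J^k=J\cdot J^k$ forces $J^k=0$. Because the $\mathfrak{m}_i$ are pairwise comaximal, so are their $k$-th powers, and for comaximal ideals the intersection coincides with the product. Hence, using $J=\bigcap_i\mathfrak{m}_i=\prod_i\mathfrak{m}_i$, one gets $\bigcap_i\mathfrak{m}_i^k=\prod_i\mathfrak{m}_i^k=\bigl(\prod_i\mathfrak{m}_i\bigr)^k=J^k=0$.

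Finally I would invoke the Chinese Remainder Theorem. The comaximality of the ideals $\mathfrak{m}_i^k$ together with $\bigcap_i\mathfrak{m}_i^k=0$ yields a ring isomorphism
$$ R = R/\bigcap_i \mathfrak{m}_i^k \;\cong\; \prod_{i=1}^{n} R/\mathfrak{m}_i^k. $$
Setting $R_i:=R/\mathfrak{m}_i^k$, it remains to check that each $R_i$ is local and finite. The maximal ideals of $R/\mathfrak{m}_i^k$ correspond to the maximal ideals of $R$ containing $\mathfrak{m}_i^k$, and the only such ideal is $\mathfrak{m}_i$; thus $R_i$ has the unique maximal ideal $\mathfrak{m}_i/\mathfrak{m}_i^k$, and it is finite as a quotient of $R$. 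This gives the asserted decomposition.

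I expect the only genuinely delicate step to be the radical bookkeeping: verifying that $J$ is nilpotent and that comaximality of the $\mathfrak{m}_i$ propagates to their powers, so that the product of the $\mathfrak{m}_i^k$ equals their intersection and hence vanishes. An alternative that avoids the Chinese Remainder Theorem is to lift the orthogonal idempotents of the semisimple quotient $R/J$ — a finite product of fields — back along the nil ideal $J$ (idempotents lift modulo nil ideals, cf. \cite{and.ful}); the lifted idempotents then split $R$ as a product of local summands $e_iR$, giving the same conclusion.
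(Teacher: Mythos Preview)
Your argument is correct and is essentially the standard proof of the structure theorem for commutative Artinian rings, specialised to finite rings. Note, however, that the paper does not supply a proof of this statement at all: Theorem~\ref{product.local} is simply quoted from McDonald \cite[4, VI.2]{mcd} as a known result and is then used as a black box in the proof of Theorem~\ref{finite.com}. So there is nothing to compare against; you have written out a full proof where the authors are content to give a reference.

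For the record, each step you outline is sound: finiteness forces the Artinian condition and hence finitely many maximal ideals; the Nakayama argument gives nilpotence of $J$; comaximality of the $\mathfrak{m}_i$ passes to their powers; and the Chinese Remainder Theorem then yields the product decomposition into the local rings $R/\mathfrak{m}_i^k$. Your alternative via lifting idempotents from the semisimple quotient $R/J$ is equally valid and is in fact closer in spirit to how such results are often presented in texts like \cite{and.ful}.
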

\begin{theorem}\label{direct.product}
Let $R$ be a direct product of rings $R_1, R_2, ... , R_n$.
\begin{enumerate}
\item\label{product.unim} A pair $\big((a_1, a_2, ... , a_n), (b_1, b_2, ... , b_n)\big)\in R^2$ is unimodular if, and only if, pairs $$(a_1, b_1)\in{R^2_1}, \ (a_2, b_2)\in {R^2_2}, \ ... \ , \ (a_n, b_n)\in {R^2_n}$$ are unimodular.
\item\label{product.adm} A pair $\big((a_1, a_2, ... , a_n), (b_1, b_2, ... , b_n)\big)\in R^2$ is admissible if, and only if, pairs $$(a_1, b_1)\in{R^2_1}, \ (a_2, b_2)\in {R^2_2}, \ ... \ , \ (a_n, b_n)\in {R^2_n}$$ are admissible.
\item\label{product.out} A pair $\big((a_1, a_2, ... , a_n), (b_1, b_2, ... , b_n)\big) \in R^2$ is an outlier if, and  only if, there exists $i\in \{1, 2, ... , n\}$ such that $(a_i, b_i)\in {R^2_i}$ is an outlier.
\item\label{product.fcs} $R\big((a_1, a_2, ... , a_n), (b_1, b_2, ... , b_n)\big)$ is a free cyclic submodule of $R^2$ if, and only if, $$R_1(a_1, b_1), \ R_2(a_2, b_2), \ ... \ , \ R_n(a_n, b_n)$$ are free cyclic submodules of $R^2_1, R^2_2, ... , R^2_n$.
\end{enumerate}
\end{theorem}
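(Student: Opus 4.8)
The plan is to prove all four equivalences by working componentwise, exploiting the fact that the module structure, the matrix ring, and the general linear group of a direct product all decompose as direct products. The crucial observation, which I would establish first as the backbone of the argument, is the ring isomorphism $GL_2(R)\cong GL_2(R_1)\times\cdots\times GL_2(R_n)$, together with the identification of $R^2$ with the external direct sum $R_1^2\oplus\cdots\oplus R_n^2$ under which a pair $\big((a_1,\dots,a_n),(b_1,\dots,b_n)\big)$ corresponds to the tuple $\big((a_1,b_1),\dots,(a_n,b_n)\big)$. Once this bookkeeping is set up, each of the four statements reduces to checking that the relevant property is defined by equations or module conditions that split coordinatewise.

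For part \ref{product.unim}, I would unwind Definition \ref{unim}: unimodularity of the big pair means there exist $x=(x_1,\dots,x_n)$, $y=(y_1,\dots,y_n)$ with $ax+by=1=(1,\dots,1)$, and since multiplication and addition in a direct product are computed slotwise, this single equation is equivalent to the system $a_ix_i+b_iy_i=1$ for all $i$, i.e. to the unimodularity of every $(a_i,b_i)$. Part \ref{product.adm} is the same argument one dimension up: a completing matrix $\left[\begin{array}{cc}a&b\\c&d\end{array}\right]$ lies in $GL_2(R)$ iff each of its coordinate projections lies in $GL_2(R_i)$, which is exactly the existence of a completion for each $(a_i,b_i)$; here I simply invoke the product decomposition of $GL_2$. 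Part \ref{product.fcs} follows by the same slotwise reading of the defining equation $(ra,rb)=(0,0)$ from the definition of \emph{free} in Section 2: with $r=(r_1,\dots,r_n)$, this holds iff $(r_ia_i,r_ib_i)=(0,0)$ for every $i$, so freeness of $R(a,b)$ is equivalent to the freeness of all $R_i(a_i,b_i)$ simultaneously.

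Part \ref{product.out} is where I expect the real subtlety, since it is the one statement with an existential (''there exists $i$'') rather than a universal quantifier, and the definition of outlier (Definition \ref{out}) is phrased \emph{negatively} as non-containment in any unimodular cyclic submodule. Being a non-outlier means $(a,b)=\alpha(x,y)$ for some $\alpha\in R$ and some unimodular $(x,y)$. The forward direction is the delicate one: I must show that if \emph{every} coordinate $(a_i,b_i)$ is a non-outlier, then the whole pair is a non-outlier. The natural move is to choose, for each $i$, a scalar $\alpha_i$ and a unimodular pair $(x_i,y_i)$ with $(a_i,b_i)=\alpha_i(x_i,y_i)$, and then assemble $\alpha=(\alpha_1,\dots,\alpha_n)$ and $(x,y)=\big((x_1,\dots,x_n),(y_1,\dots,y_n)\big)$; by part \ref{product.unim} the assembled $(x,y)$ is unimodular, and the factorization holds slotwise, giving $(a,b)=\alpha(x,y)$. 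For the contrapositive in the other direction, if some $(a_i,b_i)$ is an outlier then any putative factorization $(a,b)=\alpha(x,y)$ would project to a factorization $(a_i,b_i)=\alpha_i(x_i,y_i)$ with $(x_i,y_i)$ unimodular (again by part \ref{product.unim}), contradicting that $(a_i,b_i)$ is an outlier. The point to handle carefully is that the negation of ''outlier'' unpacks cleanly only because containment in a cyclic submodule $R(x,y)$ translates precisely into a scalar factorization, and because the unimodularity hypothesis on the assembled $(x,y)$ is controlled entirely by part \ref{product.unim}; I would therefore prove \ref{product.unim} first and cite it freely in \ref{product.out}.
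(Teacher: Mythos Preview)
Your proposal is correct and follows essentially the same approach as the paper: the authors also declare parts \ref{product.unim}, \ref{product.adm}, \ref{product.fcs} to be direct consequences of the definitions read componentwise, and for part \ref{product.out} they assemble per-coordinate factorizations $(a_i,b_i)=r_i(x_i,y_i)$ into a global one and invoke part \ref{product.unim}, exactly as you do. The only difference is that the paper writes out just the ``all non-outliers $\Rightarrow$ non-outlier'' direction, whereas you also spell out the converse via projection; your treatment is slightly more complete but not a different method.
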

\begin{proof} We give the proof only for the point \ref{product.out}., others are simple consequences of definitions.\\
\noindent
Assume that $(a_i, b_i)\in {R^2_i}$ are not outliers for all $i\in \{1, 2, ... , n\}$. Equivalently, we can say that there exist unimodular pairs $(x_i, y_i)\in {R^2_i}$ and $r_i\in R$ such that
\medskip \\
\centerline {$(a_i, b_i)=r_i(x_i, y_i)$ for all $i\in \{1, 2, ... , n\}.$}
We can write:
$$\big((a_1, a_2, ... , a_n), (b_1, b_2, ... , b_n)\big)=\big((r_1x_1, r_2x_2, ... , r_nx_n), (r_1y_1, r_2y_2, ... , r_ny_n)\big)=$$$$=(r_1, r_2, ... , r_n)\big((x_1, x_2, ... , x_n), (y_1, y_2, ... , y_n)\big).$$
In the light of the point 1.  $\big((x_1, x_2, ... , x_n), (y_1, y_2, ... , y_n)\big)$ is unimodular. According to Definition \ref{out}. $\big((a_1, a_2, ... , a_n), (b_1, b_2, ... , b_n)\big)$ is not an outlier. \qed
\end{proof}
\begin{theorem}\label{finite.com}
If $R$ is a commutative finite ring, then outliers do not generate free cyclic submodules.
\end{theorem}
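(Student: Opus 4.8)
The plan is to reduce the statement to the case of finite local rings via the structure theorem for finite commutative rings, and then to exploit nilpotency of the maximal ideal. By Theorem \ref{product.local} I would write $R = R_1 \times R_2 \times \cdots \times R_n$ with each $R_i$ a finite (commutative) local ring. It then suffices, by the componentwise descriptions in Theorem \ref{direct.product}, to understand outliers and free cyclic submodules factor by factor.

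First I would settle the local case: in a finite commutative local ring $R_i$ with maximal ideal $\mathfrak{m}_i$, no outlier generates a free cyclic submodule. Since $R_i$ is a finite local ring, $\mathfrak{m}_i$ is nilpotent, so there is a largest $t$ with $\mathfrak{m}_i^{t} \neq 0$ and $\mathfrak{m}_i^{t+1} = 0$. Choosing any nonzero $s \in \mathfrak{m}_i^{t}$, one has $s\mathfrak{m}_i \subseteq \mathfrak{m}_i^{t+1} = 0$, so $s$ annihilates every element of $\mathfrak{m}_i$. Now a pair $(a_i, b_i)$ is non-unimodular in a local ring precisely when $a_i, b_i \in \mathfrak{m}_i$: by Remark \ref{w.unim} unimodularity means $a_iR_i + b_iR_i = R_i$, and in a local ring an ideal equals $R_i$ exactly when it is not contained in $\mathfrak{m}_i$. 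For such a non-unimodular pair $s(a_i, b_i) = (0,0)$ with $s \neq 0$, so $R_i(a_i, b_i)$ is torsion. Equivalently, in $R_i$ a free cyclic submodule forces its generator to be unimodular, and a unimodular pair is never an outlier (it lies in the free cyclic submodule it generates, which is generated by a unimodular pair). Alternatively, this local statement is recorded in \cite[Theorem 20.1]{san}.

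With the local case in hand, I would argue globally by contraposition. Suppose $R\big((a_1,\dots,a_n),(b_1,\dots,b_n)\big)$ is a free cyclic submodule. By Theorem \ref{direct.product}.\ref{product.fcs} every component $R_i(a_i, b_i)$ is free, hence by the local case each $(a_i,b_i)$ is unimodular and, in particular, not an outlier in $R_i$. By Theorem \ref{direct.product}.\ref{product.out} a global pair is an outlier if and only if at least one of its components is; since none is, the pair $\big((a_1,\dots,a_n),(b_1,\dots,b_n)\big)$ is not an outlier. Thus a free cyclic submodule over $R$ is never generated by an outlier, which is exactly the assertion.

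The one step requiring care is the local claim, and within it the passage from ``non-unimodular'' to ``both entries in the maximal ideal''; the nilpotency of $\mathfrak{m}_i$, available precisely because $R_i$ is finite, is what produces the common annihilator $s$ and converts non-unimodularity into torsion. Everything else is a bookkeeping application of the product formulas of Theorem \ref{direct.product}, so no genuinely hard computation is expected.
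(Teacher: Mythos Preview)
Your proof is correct and follows essentially the same route as the paper: decompose $R$ into finite local rings via Theorem~\ref{product.local}, use Theorem~\ref{direct.product}.\ref{product.fcs} to reduce freeness to the components, and invoke the local case that free implies unimodular. The only cosmetic differences are that you spell out the nilpotency argument for the local case (the paper just cites it) and you finish via Theorem~\ref{direct.product}.\ref{product.out} rather than Theorem~\ref{direct.product}.\ref{product.unim}; either endpoint works since a unimodular pair is never an outlier.
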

\begin{proof}
Let $R$ be a commutative finite ring. In the light of Theorem \ref{product.local}. $R$ is a direct product of local rings $R_1, R_2, ... , R_n$. Write $a_i, b_i\in R_i$ for all $i\in \{1, 2, ... , n\}$.\newline
 Equivalent formulation of the above theorem is now: $R\big((a_1, a_2, ... , a_n), (b_1, b_2, ... , b_n)\big)$ is a free cyclic submodule of $R^2$ if, and only if, $\big((a_1, a_2, ... , a_n), (b_1, b_2, ... , b_n)\big)$ is unimodular.\medskip\\
$"\Rightarrow"$ \quad Assume that  $R\big((a_1, a_2, ... , a_n), (b_1, b_2, ... , b_n)\big)$ is a free cyclic submodule of $R^2$. According to Theorem \ref{direct.product}. \ref{product.fcs}. $R_i(a_i, b_i)$ is a free cyclic submodule of $R^2_i$ for all $i\in \{1, 2, ... , n\}$. As we know, $(a_i, b_i)$ is unimodular for all $i\in \{1, 2, ... , n\}$. Theorem \ref{direct.product}. \ref{product.unim}. now yields $\big((a_1, a_2, ... , a_n), (b_1, b_2, ... , b_n)\big)$ is unimodular.\medskip\\
$"\Leftarrow"$ \quad Follows from Remark \ref{un.fcs}.\qed
\end{proof}
\begin{cor}
If $R$ is a commutative finite ring, then all free cyclic submodules make up the projective line $\mathbb{P}(R)$.
\end{cor}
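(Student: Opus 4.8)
The plan is to prove that the set of all free cyclic submodules of $R^2$ coincides with the orbit $\mathbb{P}(R)$, by establishing the two inclusions separately. The inclusion $\mathbb{P}(R)\subseteq\{R(a,b): R(a,b)\text{ free}\}$ requires no real work: every point of $\mathbb{P}(R)$ is, by Definition \ref{p.line} together with Definition \ref{adm}, generated by an admissible pair, and by Remark \ref{un.fcs} admissibility forces unimodularity and hence freeness. So this direction is immediate.

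For the reverse inclusion I would start from an arbitrary free cyclic submodule $R(a,b)$ and argue that the generating pair $(a,b)$ must in fact be admissible. First I would invoke the reformulation obtained inside the proof of Theorem \ref{finite.com}, namely that over a commutative finite ring $R(a,b)$ is free if and only if $(a,b)$ is unimodular; this at once yields the unimodularity of $(a,b)$. The remaining task is to upgrade unimodularity to admissibility.

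To carry out that upgrade I would rely on the fact recorded in the Preliminaries that a commutative finite ring has stable rank $2$, so that for such rings unimodularity and admissibility coincide. Alternatively, and staying entirely within the machinery of this section, one may decompose $R=R_1\times\cdots\times R_n$ into finite local factors via Theorem \ref{product.local}; in each finite local $R_i$ unimodularity and admissibility agree, and Theorem \ref{direct.product} parts \ref{product.unim} and \ref{product.adm} then transport this equivalence back to $R$. Either route shows that $(a,b)$ is admissible, whence $R(a,b)$ possesses a free cyclic complement and therefore lies in $\mathbb{P}(R)$ by Definition \ref{p.line}.

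The only genuinely nontrivial step is the passage from unimodularity to admissibility; every other implication is a direct appeal to a definition or to an already established result, so I expect this to be the main obstacle. It is, however, fully resolved either by the stable-rank-$2$ property of finite commutative rings or by the product decomposition combined with Theorem \ref{direct.product}, and I would present whichever of the two fits the surrounding exposition more cleanly.
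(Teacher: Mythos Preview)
Your argument is correct and matches the paper's intended reasoning: the corollary is stated there without proof, as an immediate consequence of Theorem~\ref{finite.com} (free $\Leftrightarrow$ unimodular) together with the remark in the Preliminaries that finite rings have stable rank~$2$, so unimodular $\Leftrightarrow$ admissible. Your alternative route via Theorem~\ref{product.local} and Theorem~\ref{direct.product} is a valid self-contained variant but not needed.
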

\begin{ex}
Let us consider the finite noncommutative ring $R$ of characteristic $p^2, p-prime$. The additive group of $R$ is equal to $R^+=Z_{p^2}\oplus Z_p\oplus Z_p$ with a basis $\{1, t, y\}$. The multiplication in the ring $R$ is uniquely determined by the relations $t^2=0, y^2=y, ty=0, yt=t$, {\rm \cite{p4}}.\newline
\noindent We have $(1-t-y)(r+st+hy)+t(r'+s't+h'y)=r+(-r+r')t-ry$ for some $0\leqslant r, r'\leqslant p^2-1, \ 0\leqslant s, h, s', h'\leqslant p-1$, hence the pair $(1-t-y, t)$ is non-unimodular. It is easily seen that $R(1-t-y, t)$ is free. On account of Theorem \ref{id.g}. \ref{out.sk}. $(1-t-y, t)$ is an outlier. $R$ is an example of a ring non-embeddable into the ring of matrices over $GF(p^2)$, {\rm \cite{sych}}.
\end{ex}
\noindent
Now we are able to describe all finite rings up to order $p^4, p-prime$, with outliers generating free cyclic submodules. Since any finite ring with identity is isomorphic to direct sum of rings with identity of prime power order (see \cite{mcd}) and according to Theorems \ref{direct.product}. and \ref{finite.com}., we may restrict ourselves to the study noncommutative indecomposable rings up to order $p^4, p-prime$. By direct calculation, taking into account classification theorems (\cite{p3}, \cite{p4}) we find that there are exactly four such rings for any $p$:
\begin{itemize}
\item of order $p^3$: the ring of ternions over $GF(p)$;
\item of order $p^4$:
\begin{itemize}
\item with characteristic $p$: $$\{\left[\begin{array}{cclr}
a&0&0 \\ b&a&0 \\c&0&d
\end{array}\right], a, b, c, d\in GF(p)\}; \ \ \ \ \{\left[\begin{array}{cclr}
a&c&d \\ 0&b&0 \\0&0&b
\end{array}\right], a, b, c, d\in GF(p)\};$$
\item with characteristic $p^2$ - the ring from the Example 4.
\end{itemize}
\end{itemize}
\noindent {\bf Acknowledgement}. The authors wish to express their thanks to Stanis{\l}aw Drozda for computer support of our research.

\end{document}